\documentclass[review]{elsarticle}
\usepackage[T1]{fontenc}  	
\usepackage[latin1]{inputenc}
\usepackage[french, english]{babel} 		
\usepackage{amssymb}  			
\usepackage{amsmath}	
\usepackage{amsthm}	
\usepackage{units}
\usepackage{esint}  				
\usepackage{bbm}   				
\usepackage{color,graphicx} 			
\usepackage[small]{caption}
\usepackage{cases}	
\usepackage{mathpazo}

\newcommand{\be}{\begin{equation}}
\newcommand{\ee}{\end{equation}}

\renewcommand{\Re}{\mathop{\rm Re}\nolimits}
\DeclareMathOperator{\sgn}{sgn} 
\DeclareMathOperator{\li}{li}

\setlength{\arraycolsep}{2pt}
\addtolength{\textwidth}{35mm} 
\addtolength{\voffset}{-16mm} 
\addtolength{\textheight}{29mm} 
\addtolength{\hoffset}{-19mm} 
\addtolength{\footskip}{8mm}			

\newtheorem{theorem}{Theorem}
\newtheorem{lemma}{Lemma}
\newtheorem{proposition}{Proposition}
\newtheorem{corollary}{Corollary}
\newtheorem{remark}{Remark}

\makeatletter
\newcommand{\specialnumber}[1]{
\def\tagform@##1{\maketag@@@{(\ignorespaces##1\unskip\@@italiccorr#1)}}}
\makeatother

\makeatletter
\def\ps@pprintTitle{%
\let\@oddhead\@empty
\let\@evenhead\@empty
\let\@oddfoot\@empty
\let\@evenfoot\@oddfoot
}\makeatother

\begin{document}

\begin{frontmatter}

\title{A note on some constants related to the zeta--function \\ and their relationship with the Gregory coefficients}

\author{Iaroslav V.~Blagouchine\corref{cor1}} 
\ead{iaroslav.blagouchine@pdmi.ras.ru, iaroslav.blagouchine@univ-tln.fr}
\cortext[cor1]{Corresponding author.}
\address{Steklov Institute of Mathematics at St.-Petersburg, Russia \\
\& University of Toulon, France.} 

\author{Marc--Antoine Coppo\corref{cor2}}
\ead{coppo@unice.fr}
\address{Université Côte d'Azur, CNRS, LJAD (UMR 7351), France.}

\begin{abstract}
In this article, new series for the first and second Stieltjes constants (also known as generalized Euler's constant),
as well as for some closely related constants
are obtained. These series contain rational terms only and involve the so--called Gregory coefficients, 
which are also known as (reciprocal) logarithmic numbers, 
Cauchy numbers of the first kind and Bernoulli numbers of the second kind. In addition, 
two interesting series with rational terms for Euler's constant $\gamma$ and the constant $\ln2\pi$ are given, and yet another generalization of Euler's constant is proposed and various formulas for the calculation of these constants are obtained. 
Finally, we mention in the paper that almost all the constants 
considered in this work admit simple representations via the Ramanujan summation.
\end{abstract}

\begin{keyword}
Stieltjes constants, Generalized Euler's constants, Series expansions,
Ramanujan summation, Harmonic product of sequences,
Gregory's coefficients, Logarithmic numbers, Cauchy numbers, Bernoulli numbers 
of the second kind, Stirling numbers of the first kind, Harmonic numbers.
\end{keyword}
\end{frontmatter}

\section{Introduction and definitions}
The zeta-function 
\be\notag
\,\zeta(s) \equiv\sum\limits_{n=1}^\infty n^{-s}
\,= \prod\limits_{n=1}^\infty \!\big(1-p_n^{-s} \big)^{-1} 
\,\,,\qquad
\begin{array}{l}
\Re s>1\, \\[1mm]
p_n\in\mathbbm{P}\equiv\{2,3,5,7,11,\ldots\}
\end{array}
\ee
is of fundamental and long-standing importance in analytic number theory, modern analysis,
theory of $L$--functions, prime number theory and in a variety of other
fields. The $\zeta$--function is a meromorphic function on the entire complex
plane, except at point $s=1$ at which it has one simple pole with residue 1. 
The coefficients of the regular part of its Laurent series, denoted $\gamma_m$,
\be\label{984hd83ghsd}
\zeta(s)\,=\,\frac{1}{\,s-1\,} + \gamma + \sum_{m=1}^\infty \frac{(-1)^m (s-1)^m}{m!} \gamma_m\,, 
\qquad\qquad \qquad s\neq1.
\ee
where $\gamma$ is Euler's constant\footnote{We recall 
that $\,\gamma=\lim_{n\to\infty}(H_n - \ln n)=-\Gamma'(1)=0.5772156649\ldots\,$, where $H_n$ is the
harmonic number.},
and those of the Maclaurin series $\delta_m$
\be\label{i23udhjn2}
\zeta(s)\,=\,\frac{1}{\,s-1\,} + \frac{1}{2} + \sum_{m=1}^\infty \frac{(-1)^m s^m}{m!} \delta_m\,, 
\qquad\qquad \qquad s\neq1.
\ee
are of special interest and have been widely studied in literature,
see e.g.~\cite{gram_01}, \cite[vol.~I, letter 71 and following]{stieltjes_01}, \cite[p.~166 \emph{et seq.}]{finch_01},
\cite{jensen_02,jensen_03,franel_01,hardy_03,ramanujan_01,briggs_02,berndt_02,todd_01,israilov_01,zhang_01,coppo_01,iaroslav_09,lehmer_02,sitaramachandrarao_01,connon_08}. 
Coefficients $\gamma_m$ are 
usually called \emph{Stieltjes constants} or \emph{generalized Euler's constants} (both names being in use), while $\delta_m$ do not possess a 
special name.\footnote{It follows from \eqref{i23udhjn2} that $\,\delta_m=(-1)^m\big\{\zeta^{(m)}(0) + m!\big\}\,$} It may be shown with the aid of the Euler--MacLaurin summation
that $\gamma_m$ and $\delta_m$ may be also given by the following asymptotic representations
\begin{eqnarray} 
\gamma_m = \lim_{n\to\infty} \left\{
\sum_{k=1}^n \frac{\ln^m k}{k} -
\frac{\ln^{m+1} n}{m+1} \right\} , \quad m=1, 2, 3,\ldots\,,\quad\footnotemark
\end{eqnarray}
and\footnotetext{This representation is very old and was already known to Adolf Pilz, Stieltjes, Hermite and Jensen
\cite[p.~366]{iaroslav_09}.}
\begin{eqnarray}
\label{k98y9g87fcfcf} 
\delta_m \,= \lim_{n\to\infty} \!\left\{
\sum_{k=1}^n \ln^m \! k\, - 
n\,m!\!\sum_{k=0}^m (-1)^{m+k}\frac{\,\ln^k \! n\,}{k!}\,
- \frac{\,\ln^m \! n\,}{2} \right\}, \quad m=1, 2, 3,\ldots	\,,	\quad\footnotemark
\end{eqnarray}
\footnotetext{A slightly different 
expression for $\delta_m$ was given earlier by and by Lehmer \cite[Eq.~(5), p.~266]{lehmer_02}, Sitaramachandrarao 
\cite[Theorem 1]{sitaramachandrarao_01},
Finch \cite[p.~168 \emph{et seq.}]{finch_01} and Connon \cite[Eqs.~(2.15), (2.19)]{connon_08}.
The formula given by these writers differ from our \eqref{k98y9g87fcfcf} by the presence of the definite integral of $\ln^m \! x $
taken over $[1,n]$, which in fact may be reduced to a finite combination of logarithms and factorials.} 
These representations may be translated into these simple expressions
\begin{eqnarray}\notag
\gamma_m \,= \sum_{k\geqslant 1}^{\mathcal{R}} \frac{\ln^m k}{k} \,, \qquad
\delta_m \,= \sum_{k\geqslant 1}^{\mathcal{R}} \ln^m k \,,\qquad
m=1, 2, 3,\ldots
\end{eqnarray}
where $\sum\limits^{\mathcal{R}}$  stands for the sum of the series in the sense of the Ramanujan summation of divergent series \footnote{For more details on the \emph{Ramanujan summation},
see \cite[Ch.~6]{berndt_03}, \cite{candelpergher_02,candelpergher_01,candelpergher_03,candelpergher_04}.}.
Due to the reflection formula for the zeta-function
$\,\zeta(1-s)=2\zeta(s)\Gamma(s) (2\pi)^{-s}\cos\frac{1}{2}\pi s\,$, numbers $\delta_m$ and $\gamma_m$ 
are related to each other polynomially and also involve Euler's constant $\gamma$
and the values of the $\zeta$--function at naturals. For the first values of $m$, this gives
\be\label{976v076076}
\begin{array}{ll}
&\delta_1\,=\,\frac{1}{2}\ln2\pi -1 \,=\,-0.08106146679\ldots\\[2mm]
&\delta_2\,=\,\gamma_1 + \frac{1}{2}\gamma^2 - \frac{1}{2}\ln^2 2\pi -\frac{1}{24}\pi^2+2\,=\,-0.006356455908\ldots\quad\footnotemark   \\[2mm]
&\delta_3\,=\,	-\frac32\gamma_2 -  3\gamma_1\gamma  - \gamma^3 
- \big(3\gamma_1 +\frac32\gamma^2 -\frac18\pi^2 \big)\ln2\pi+ \zeta(3) +\frac12\ln^3 2\pi - 6 \,=\,+0.004711166862\ldots \\[2mm]
&\text{and conversely}		\\[2mm]
&\gamma_1\,=\,\delta_2 +2\delta_1^2+4\delta_1-\frac12\gamma+\frac{1}{24}\pi^2\,=\,-0.07281584548\ldots \\[2mm]
&\gamma_2\,=\, -\frac23\delta_3 - 2\delta_2 (\gamma+2) - 4\delta_1\delta_2 - \frac{16}{3}\delta_1^3
- 4\delta_1^2(\gamma+4) - 8\delta_1(\gamma+1) - \frac{1}{12}\gamma\pi^2
+ \frac13\gamma^3+\frac23\zeta(3)- \frac43 \\[2mm]
&\phantom{\gamma_1}\,=\,-0.009690363192\ldots \notag
\end{array}
\ee
\footnotetext{This expression for $\delta_2$ was found by Ramanujan,
see e.g.~\cite[(18.2)]{berndt_03}.}
Relationships between higher--order coefficients become very cumbersome, but may be found via a semi--recursive procedure described in 
\cite{apostol_02}.
Altough there exist numerous representations for $\gamma_m$ and $\delta_m$, 
no convergent series with rational terms only are known for them
(unlike for Euler's constant $\gamma$, see e.g.~\cite[Sect.~3]{iaroslav_09}, or for various expressions containing it
\cite[p.~413, Eqs.~(41), (45)--(47)]{iaroslav_08}).
Recently, divergent envelopping series for $\gamma_m$ containing rational terms only have been obtained in \cite[Eqs.~(46)--(47)]{iaroslav_09}.
In this paper, by continuing the same line of investigation, we derive convergent series representations with rational coefficients 
for  $\gamma_1, \delta_1, \gamma_2$ and $\delta_2$, and also find two new series of the same type for Euler's constant $\gamma$
and $\ln2\pi$ respectively.
These series are not simple and involve a product of \emph{Gregory coefficients} $G_n$, which are also known 
as \emph{(reciprocal) logarithmic numbers}, \emph{Bernoulli numbers of
the second kind} $b_n$, and normalized \emph{Cauchy numbers of the first kind} $C_{1,n}$.
Similar expressions for higher--order constants $\gamma_m$ and $\delta_m$ may be obtained by the same procedure, 
using the harmonic product of sequences introduced in \cite{candelpergher_05}, but are quite cumbersome.
Since the Stieltjes constants $\gamma_m$ generalize Euler's constant $\gamma$ and since our series
contain the product of $G_n$, these new series
may also be seen as the generalization of the famous Fontana--Mascheroni series
\begin{equation}\label{923jsd3c}
\begin{array}{ll}
\gamma \, & \displaystyle= \sum_{n=1}^{\infty}\!\frac{\, \big|G_n \big|\,}{n} \,=\,\frac{1}{2}+\frac{1}{24}+\frac
{1}{72}+\frac{19}{2880}+\frac{3}{800}
+\frac{863}{362\,880}+\frac{275}{169\,344} + \ldots
\end{array}
\end{equation}
which is the first known series representation for  Euler's constant having rational terms only, see \cite[pp.~406, 413, 429]{iaroslav_08}, \cite[p.~379]{iaroslav_09}. 
In Appendix A, we introduce yet another set of constants $\kappa_m=\sum\limits_{n\geqslant1} |G_n|\, n^{-m-1}$, which 
also generalize Euler's constant $\gamma$. 
These numbers, similarly to $\gamma_m$,
coincide with Euler's constant at $m=0$ and have various interesting series and integral representations, none of them
being reducible to the ``classical'' mathematical constants.

\section{Series expansions}
\subsection{Preliminaries}
Since the results, that we come to present here, are essentially based on the Gregory coefficients and Stirling numbers,
it may be useful to briefly recall their definition and properties. Gregory numbers, denoted below $G_n$,
are rational alternating $G_1=+\nicefrac{1}{2}\,$,
\mbox{$G_2=\nicefrac{-1}{12}\,$}, $G_3=\nicefrac{+1}{24}\,$, $G_4=\nicefrac{-19}{720}\,$,
$G_5=\nicefrac{+3}{160}\,$, $G_6=\nicefrac{-863}{60\,480}\,$,\ldots \,,
decreasing in absolute value, and are also closely related to the theory of finite differences; 
they behave as $\,\big(\,n\ln^2 n\big)^{-1}\,$ at $n\to\infty$ and may be bounded from below and above accordingly
to \cite[Eqs.~(55)--(56)]{iaroslav_08}.
They may be defined either via their generating function
\begin{eqnarray}
\label{eq32}
\frac{z}{\ln(1+z)} = 1+\sum
_{n=1}^\infty G_n \, z^n ,\qquad|z|<1\,,
\end{eqnarray}
or recursively
\be\label{jfhek4s}
G_n\,=\,\frac{(-1)^{n+1}}{\,n+1\,}\,+\sum_{l=1}^{n-1}\! \frac{(-1)^{n-l}G_l}{\,n+1-l\,}\,,
\qquad G_1=\frac{1}{2}\,,\quad n=2,3,4,\ldots
\ee
or explicitly\footnote{For more information
about $G_n$, see \cite[pp.~410--415]{iaroslav_08}, \cite[p.~379]{iaroslav_09}, \cite{iaroslav_11},
and the literature given therein (nearly 50 references).}
\begin{eqnarray}
G_n\,=
\,\frac{1}{n!}\! \int\limits_0^1\! x\,(x-1)\,(x-2)\cdots(x-n+1)\, dx\,
,\qquad n=1,2,3,\ldots
\end{eqnarray}
Throughout the paper, 
we also make use of the Stirling numbers of the first kind, which we denote below by $S_1(n,l)$. 
Since there are different definitions and notations for them, we specify that in our definition they are simply 
the coefficients in the expansion of falling factorial
\be\label{x2l3dkkk03d}
x\,(x-1)\,(x-2)\cdots(x-n+1)\,=\,\sum_{l=1}^n S_1(n,l)\cdot x^l \,,\qquad n=1, 2, 3,\ldots
\ee
and may equally be defined via the generating function
\be\label{ld2jr3mnfdmd}
\frac{\ln^l(1+z)}{l!}\,=\sum_{n=l}^\infty\!\frac{S_1(n,l)}{n!}z^n \,=
\sum_{n=0}^\infty\!\frac{S_1(n,l)}{n!}z^n \,, \qquad  |z|<1\,,\quad l=0, 1, 2, \ldots \\[6mm]
\ee 
It is important to note that $\sgn\big[ S_1(n,l)\big] =(-1)^{n\pm l}$.
We also recall that the Stirling numbers of the first kind and the Gregory coefficients are linked by the following relation\footnote{More
information and references (more than 60) on the Stirling numbers of the first kind
may be found in \cite[Sect.~2.1]{iaroslav_08} and \cite[Sect.~1.2]{iaroslav_09}. We also note that our definitions for the Stirling numbers agree with 
those adopted by \textsc{Maple} or \textsc{Mathematica}: our $S_1(n,l)$ equals to \texttt{Stirling1(n,l)} from the former
and to \texttt{StirlingS1[n,l]} from  the latter.}
\begin{eqnarray}\label{ldhd9ehn}
G_n\,=
\,\frac{1}{\,n!\,}\!\sum_{l=1}^n \frac{\,S_1(n,l)\,}{l+1}
,\qquad n=1,2,3,\ldots
\end{eqnarray}

\subsection{Some auxiliary lemmas}
Before we proceed with the series expansions for $\delta_m$ and $\gamma_m$,
we need to prove several useful lemmas. 

\begin{lemma}
For each natural number $k$, let 
\[
\sigma_k\equiv\sum_{n=1}^{\infty}\!\frac{\big|\,G_n\big|}{\,n+k\,}\,,
\]
the following equality holds
\be
\,
\sigma_k =\,\frac{1}{k}+\sum_{m=1}^{k} (-1)^m \binom{k}{m}\ln(m+1)\,,\qquad k=1, 2, 3,\ldots
\label{s2kj0sdw} 
\ee
\end{lemma}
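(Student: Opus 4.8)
The plan is to convert the series defining $\sigma_k$ into a definite integral and then to evaluate that integral in closed form. Since $\sgn G_n=(-1)^{n+1}$ we have $|G_n|=(-1)^{n+1}G_n\geqslant0$, and since $G_n=O\big((n\ln^2 n)^{-1}\big)$ the series $\sum_n|G_n|$ converges; hence $\sum_{n\geqslant1}|G_n|\,t^n$ converges uniformly on $[0,1]$ by the Weierstrass $M$--test. Using $\dfrac{1}{n+k}=\int_0^1 t^{n+k-1}\,dt$ and interchanging summation and integration, I would first write
\[
\sigma_k=\sum_{n=1}^\infty |G_n|\int_0^1 t^{n+k-1}\,dt=\int_0^1 t^{k-1}\Big(\,\sum_{n=1}^\infty|G_n|\,t^n\Big)\,dt\,.
\]

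The next step is to identify the generating function inside the parentheses. Setting $z=-t$ in \eqref{eq32} and using $(-1)^nG_n=-|G_n|$ gives, for $0\leqslant t<1$,
\[
\sum_{n=1}^\infty|G_n|\,t^n=1+\frac{t}{\ln(1-t)}\,,
\]
the limiting values at $t=0$ and $t=1$ being $0$ and $1$ respectively. Substituting this into the previous display and then changing the variable by $u=1-t$ yields
\[
\sigma_k=\frac1k+\int_0^1\frac{t^k}{\ln(1-t)}\,dt=\frac1k+\int_0^1\frac{(1-u)^k}{\ln u}\,du\,.
\]

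It remains to compute $I_k:=\int_0^1(1-u)^k(\ln u)^{-1}\,du$. The key observation is that, because $\sum_{m=0}^k\binom km(-1)^m=0$, the binomial theorem may be recast as
\[
(1-u)^k=\sum_{m=1}^k\binom km(-1)^m\big(u^m-1\big)\,,
\]
where each integrand $(u^m-1)/\ln u$ is continuous and bounded on $(0,1)$ — it tends to $m$ as $u\to1^-$ and to $0$ as $u\to0^+$ — so term-by-term integration of this finite sum is legitimate. Combined with the Frullani-type evaluation
\[
\int_0^1\frac{u^m-1}{\ln u}\,du=\int_0^1\!\!\int_0^m u^{\,r}\,dr\,du=\int_0^m\frac{dr}{r+1}=\ln(m+1)
\]
(the interchange being justified by Tonelli, the integrand having constant sign), this gives $I_k=\sum_{m=1}^k(-1)^m\binom km\ln(m+1)$, and since $\sigma_k=\frac1k+I_k$ this is exactly \eqref{s2kj0sdw}.

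The one point that requires genuine care is the logarithmic singularity at $u=1$: the naive expansion $\int_0^1(1-u)^k(\ln u)^{-1}\,du=\sum_m\binom km(-1)^m\int_0^1 u^m(\ln u)^{-1}\,du$ is \emph{not} valid, each integral on the right-hand side being divergent, and the regrouping $(1-u)^k=\sum_{m\geqslant1}\binom km(-1)^m(u^m-1)$ is precisely what removes this obstruction without altering the value of the integral. The interchange of summation and integration in the first step is the other technical point, and it is handled by the uniform convergence noted above; everything else is routine.
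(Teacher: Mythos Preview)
Your proof is correct and takes a genuinely different route from the paper's. The paper expands $|G_n|$ via the Stirling--number identity $G_n=\frac{1}{n!}\sum_{l}S_1(n,l)/(l+1)$, then uses the generating function \eqref{ld2jr3mnfdmd} to resum into powers of $\ln(1-x)$, substitutes $x=1-e^{-t}$, expands $(1-e^{-t})^{k-1}$ binomially, evaluates the resulting $t$--integrals as $\Gamma$--values, and finally sums the $l$--series into $\ln\frac{m+2}{m+1}$ before telescoping with Pascal's rule. You bypass the Stirling numbers entirely: you recognize $\sum_n|G_n|t^n$ directly from \eqref{eq32} as $1+t/\ln(1-t)$, reduce $\sigma_k$ to the single integral $\int_0^1(1-u)^k(\ln u)^{-1}\,du$, and evaluate it by the regrouping $(1-u)^k=\sum_{m\geqslant1}(-1)^m\binom{k}{m}(u^m-1)$ together with the Frullani identity $\int_0^1(u^m-1)/\ln u\,du=\ln(m+1)$. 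Your argument is shorter and more elementary, and your explicit remark about why the naive binomial splitting fails (each $\int_0^1 u^m(\ln u)^{-1}\,du$ diverges at $u=1$) is a nice touch. The paper's detour through Stirling numbers, on the other hand, fits naturally with the machinery used elsewhere in the article (e.g.\ in the proofs of formulas \eqref{g8t6v877}--\eqref{g8t6v877b}).
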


\begin{proof}
By using \eqref{ldhd9ehn} and by making use of the generating equation
for the Stirling numbers of the first kind \eqref{ld2jr3mnfdmd}, we obtain
\be\label{jkd293ndnw}
\begin{array}{l}
\displaystyle
\sum_{n=1}^{\infty}\!\frac{\big|\,G_n\big|}{\,n+k\,}\,
=\sum_{n=1}^{\infty} \frac{1}{\,n!\,} \sum_{l=1}^{n}\frac {(-1)^{l+1}\big|S_1(n,l)\big|}{l+1}\cdot
\!\!\underbrace{\,\int\limits_0^1 \! x^{n+k-1} dx}_{1/(n+k)}\,
=\,-\!\sum_{l=1}^{\infty} \frac{1}{\,(l+1)!\,}\! \int\limits_0^1 \! x^{k-1}\!
\ln^{l}(1-x)\,  dx \\[8mm]
\displaystyle\qquad
=\sum_{l=1}^{\infty} \frac{(-1)^{l+1}}{\,(l+1)!\,}\int\limits_0^\infty\! 
\big(1-e^{-t}\big)^{k-1} t^l e^{-t}\,dt\,=
\sum_{l=1}^{\infty} \frac{(-1)^{l+1}}{\,(l+1)!\,}\sum_{m=0}^{k-1}
(-1)^m\binom{k-1}{m}\underbrace{\int\limits_0^\infty\!e^{-t(m+1)}t^l\,dt}_{l! \,(m+1)^{-l-1}}\\[8mm]
\displaystyle\qquad
=
\sum_{l=1}^{\infty} \frac{(-1)^{l+1}}{\,l+1\,}\sum_{m=0}^{k-1}
(-1)^m\binom{k-1}{m}\frac{1}{(m+1)^{l+1}} \,=
\sum_{m=0}^{k-1} (-1)^m \binom{k-1}{m}\left\{\frac{1}{m+1}-\ln\frac{m+2}{m+1} \right\}
\end{array}
\ee
where at the last stage we made a change of variable $\,x=1-e^{-t}\,$
and used the well-known formula for the $\Gamma$--function. But since
\be\notag
\sum\limits_{m=0}^{k-1} \frac{(-1)^m}{m+1} \binom{k-1}{m}\,=\,\frac{1}{\,k\,}\,,
\qquad\text{and}
\qquad\binom{k-1}{m}+\binom{k-1}{m-1}\,=\,\binom{k}{m}\,,
\ee
the last finite sum in \eqref{jkd293ndnw} reduces to \eqref{s2kj0sdw}.\footnote{This result 
appeared without proof in \cite[p.~413]{iaroslav_08}. 
For a slightly more general result, see \cite[Proposition 1]{coppo 02}.}
\end{proof}

\begin{remark}
One may show \footnote{See \cite{candelpergher_04} Eq.~(4.31).}that $\sigma_k$ may also be written in terms of the Ramanujan summation:
\be
\sigma_{k} = \sum_{n\geqslant 1}^{\mathcal{R}} \frac{\Gamma(k+1) \Gamma(n)}{\Gamma(n+k+1)}
= \sum_{n\geqslant 1}^{\mathcal{R}} \textit{B}(k+1,n).
\ee
where $\textit{B}$ stands for the Euler beta-function.
\end{remark}

\begin{lemma} Let $a = (a(1), a(2), \dots, a(n), \dots )$ be a sequence of complex numbers. The following identity is true for all nonnegative integers $n$:
\begin{equation}\label{F2}
\sum_{l=0}^{n} (-1)^l \binom{n}{l} \frac{a(l+1)}{l+1} = \frac1{n+1} \sum_{k=0}^{n} \sum_{l=0}^{k}(-1)^l \binom{k}{l}a(l+1)
 \end{equation}
In particular, if $a = \ln^m$ for any natural $m$, then this identity reduces to 
\begin{equation}\label{F3}
 \sum_{l=0}^{n} (-1)^l \binom{n}{l} \frac{\ln^m(l+1)}{l+1} = \frac1{n+1} \sum_{k=1}^{n} \sum_{l=1}^{k}(-1)^l \binom{k}{l}\ln^m(l+1)
\end{equation}
\end{lemma}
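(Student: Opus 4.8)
The plan is to reduce \eqref{F2} to an elementary binomial identity by comparing coefficients. Since both sides of \eqref{F2} are finite linear forms in the values $a(1),\dots,a(n+1)$, it suffices to verify, for each fixed integer $j\geq1$, that the coefficient of $a(j)$ is the same on both sides. On the left-hand side the value $a(j)$ occurs only in the term with $l=j-1$, so its coefficient is $(-1)^{j-1}\binom{n}{j-1}/j$, understood to be zero unless $1\leq j\leq n+1$. On the right-hand side $a(j)$ again arises only through $l=j-1$ inside the inner sum, where the outer index must satisfy $k\geq j-1$; interchanging the two summations, the coefficient of $a(j)$ becomes $\dfrac{(-1)^{j-1}}{n+1}\sum_{k=j-1}^{n}\binom{k}{j-1}$.

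Next I would evaluate this inner sum by the hockey-stick (column-sum) identity $\sum_{k=r}^{n}\binom{k}{r}=\binom{n+1}{r+1}$ with $r=j-1$, which turns the right-hand coefficient into $\dfrac{(-1)^{j-1}}{n+1}\binom{n+1}{j}$. Matching the two coefficients then amounts to the absorption identity $\dfrac1j\binom{n}{j-1}=\dfrac{1}{n+1}\binom{n+1}{j}$, a one-line factorial check (both sides equal $\frac{n!}{j!\,(n+1-j)!}$), valid for $1\leq j\leq n+1$ and trivially true otherwise. This establishes \eqref{F2}. The identity \eqref{F3} is then the specialisation $a=\ln^m$ with $m\geq1$: since $\ln^m(l+1)$ vanishes at $l=0$, every $l=0$ term disappears on the left and the whole $k=0$ summand disappears on the right, so the summations may be re-indexed to start at $l=1$ and $k=1$ respectively, giving exactly \eqref{F3}.

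I do not expect a serious obstacle here; the one point needing care is the bookkeeping in the interchange of summations on the right-hand side, namely getting the range $j-1\leq k\leq n$ of the outer index right and handling the boundary case $j=n+1$ (where $\binom{n}{j-1}=\binom{n}{n}=1$ and the column sum collapses to the single term $\binom{n}{n}=1=\binom{n+1}{n+1}$). As an alternative one could argue by induction on $n$: writing $b(k)=\sum_{l=0}^{k}(-1)^l\binom{k}{l}a(l+1)$ for the inner binomial transform, the right-hand side $R_n$ obeys $(n+1)R_n-nR_{n-1}=b(n)$ by inspection of its definition, while the absorption identity together with Pascal's rule yields the same recursion $(n+1)L_n-nL_{n-1}=b(n)$ for the left-hand side $L_n$; since $L_0=R_0=a(1)$, induction closes the argument.
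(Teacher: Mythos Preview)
Your proof is correct and entirely self-contained. The coefficient comparison is accurate: after swapping the order of summation on the right, the inner column sum $\sum_{k=j-1}^{n}\binom{k}{j-1}=\binom{n+1}{j}$ is the standard hockey-stick identity, and the absorption identity $\frac{1}{j}\binom{n}{j-1}=\frac{1}{n+1}\binom{n+1}{j}$ finishes the job. The specialisation to $a=\ln^m$ is handled correctly, and your alternative induction via the recursion $(n+1)R_n-nR_{n-1}=b(n)$ is also sound.

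The paper, by contrast, does not actually prove \eqref{F2}: it simply records that the formula is ``an explicit translation of \cite[Proposition~7]{candelpergher_05}'', i.e.\ it is imported from the theory of the harmonic product of sequences developed in that reference. Your argument is therefore genuinely different and more elementary: you bypass the harmonic-product machinery entirely and reduce the lemma to two classical binomial identities. What the paper's route buys is context---\eqref{F2} is placed inside a broader framework that is reused later (notably for $\gamma_2$ and $\delta_2$ via \eqref{F7}--\eqref{F10}); what your route buys is independence from the external reference and a proof that a reader can check in a few lines.
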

\begin{proof}
Formula (\ref{F2}) is an explicit translation of \cite[Proposition 7]{candelpergher_05}. 
\end{proof}

\begin{lemma}
For all natural $m$ 
\begin{align}\label{nhfo2ihf}
\gamma_m & =\sum_{n=1}^{\infty}\frac{|G_{n+1}|}{n+1} \sum_{k=1}^n 
\sum_{l=1}^{k}(-1)^l \binom{k}{l}\ln^m(l+1)\\[2mm]
\delta_m & = \sum_{n=1}^{\infty}|G_{n+1}|\sum_{l=1}^{n} (-1)^l \binom{n}{l} \ln^m(l+1)
\end{align}
\end{lemma}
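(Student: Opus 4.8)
The plan is to obtain both series from the Ramanujan--summation identities $\delta_m=\sum^{\mathcal R}_{k\geqslant1}\ln^m k$ and $\gamma_m=\sum^{\mathcal R}_{k\geqslant1}(\ln^m k)/k$ recalled in the introduction, together with one general expansion that plays here the role that the Euler--Maclaurin formula plays elsewhere. The first thing I would isolate and prove is the \emph{master identity}
\[
\sum^{\mathcal R}_{k\geqslant1} f(k)\;=\;\sum_{n=1}^{\infty} G_n\,\bigl(\Delta^{\,n-1}f\bigr)(1),
\qquad \Delta g(x):=g(x+1)-g(x),
\]
valid for $f$ in the admissible class of the Ramanujan summation, which contains $f(x)=\ln^m x$ and $f(x)=(\ln^m x)/x$ (both analytic and of sub--polynomial growth on $\Re x>0$). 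For its proof I would use the integral formula $G_n=\int_0^1\binom{x}{n}dx$, i.e.\ a rewriting of the integral representation of $G_n$ given earlier. By Newton's forward--interpolation formula, the function $P_f(x):=\sum_{n\geqslant0}\binom{x}{n+1}(\Delta^n f)(1)$ satisfies $P_f(0)=0$ and, via Pascal's rule $\binom{x}{n+1}-\binom{x-1}{n+1}=\binom{x-1}{n}$, the difference equation $P_f(x)-P_f(x-1)=f(x)$; hence $x\mapsto -P_f(x-1)$ solves the same difference equation as the Ramanujan function $R_f$, and within the Ramanujan class their difference, being $1$-periodic, is constant, equal to $c=\int_0^1 P_f$ by the normalisation $\int_1^2 R_f=0$. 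Evaluating at $x=1$ and using $\int_0^1\binom{x}{n+1}dx=G_{n+1}$ gives $\sum^{\mathcal R}_{k\geqslant1}f(k)=R_f(1)=\int_0^1 P_f(x)\,dx=\sum_{n\geqslant0}G_{n+1}(\Delta^n f)(1)$. This is in substance contained in \cite{candelpergher_04}, and it is the only step where convergence of a Newton series, Fubini, and uniqueness in the Ramanujan class are invoked.

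Granting the master identity, the rest is sign and index bookkeeping. Since $\sgn G_{n+1}=(-1)^n$ and $\ln^m 1=0$,
\[
G_{n+1}\bigl(\Delta^n[\ln^m]\bigr)(1)=|G_{n+1}|\sum_{l=0}^{n}(-1)^{l}\binom{n}{l}\ln^m(l+1)=|G_{n+1}|\sum_{l=1}^{n}(-1)^{l}\binom{n}{l}\ln^m(l+1),
\]
so that, summing over $n\geqslant1$ (the $n=0$ term being empty), one gets precisely the stated series for $\delta_m$. Applying the master identity instead to $f(x)=(\ln^m x)/x$ and running the same reduction gives $\gamma_m=\sum_{n\geqslant1}|G_{n+1}|\sum_{l=1}^{n}(-1)^l\binom{n}{l}\ln^m(l+1)/(l+1)$; then Lemma~2 in the form \eqref{F3}, used with this $n$, replaces the inner sum by $\frac1{n+1}\sum_{k=1}^{n}\sum_{l=1}^{k}(-1)^l\binom{k}{l}\ln^m(l+1)$, which is exactly the claimed series for $\gamma_m$. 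Conceptually, $\gamma_m$ is the Ramanujan sum of the harmonic product of $(1/k)$ and $(\ln^m k)$, and \eqref{F3} is the combinatorial identity governing the Gregory--Newton expansion of such a product, which is why Lemma~2 was needed.

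The main obstacle is the first step, namely making the Gregory--Newton expansion rigorous for the particular slowly--growing functions at hand: one must verify that $\ln^m x$ and $(\ln^m x)/x$ lie in the admissible class, that their Newton series converge on $[0,1]$ so term--by--term integration is legitimate, and that the resulting double series converge absolutely. For the last point the bound $|G_n|=O\!\bigl(1/(n\ln^2 n)\bigr)$ recalled above, together with the rapid decay of $(\Delta^n[\ln^m])(1)$ --- visible from an integral representation of the finite difference, e.g.\ $(\Delta^n[\ln])(1)=(-1)^{n+1}\int_0^1 \frac{u^{n}}{-\ln(1-u)}\,du$ and its analogue for $(\ln^m x)/x$ --- leaves a comfortable margin. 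Everything downstream of the master identity is elementary.
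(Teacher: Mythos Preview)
Your argument is correct, and the final step --- invoking \eqref{F3} to pass from the single finite sum $\sum_{l}(-1)^l\binom{n}{l}\ln^m(l+1)/(l+1)$ to the double sum $\frac{1}{n+1}\sum_{k}\sum_{l}$ --- coincides with what the paper does. The difference lies in how you reach that intermediate point. The paper simply quotes the Ser--Hasse type representation
\[
\zeta(s)=\frac{1}{s-1}+\sum_{n\geqslant0}|G_{n+1}|\sum_{k=0}^{n}(-1)^{k}\binom{n}{k}(k+1)^{-s}
\]
and reads off $\gamma_m$ and $\delta_m$ as the Taylor coefficients at $s=1$ and $s=0$; this is a two-line argument once the representation is granted. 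You instead start from the Ramanujan summation identities for $\gamma_m$ and $\delta_m$ and derive a general Gregory--Newton ``master identity'' $\sum^{\mathcal R}f=\sum_{n\geqslant0}G_{n+1}(\Delta^n f)(1)$, then specialise $f$. What your route buys is a self-contained explanation of \emph{why} the Gregory coefficients appear (they are $\int_0^1\binom{x}{n+1}dx$, integrating the Newton interpolant), and a template that works uniformly for any admissible $f$; the price is the analytic overhead you correctly identify (convergence of the Newton series on $[0,1]$, term-by-term integration, and uniqueness of the $1$-periodic remainder in the Ramanujan class). The paper's route hides that same overhead inside the cited zeta representation and its term-by-term differentiability. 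In short: both arguments are sound and meet at the same intermediate formulas; yours is more conceptual, the paper's is shorter by outsourcing the hard step.
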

\begin{proof}
Using this representation for the $\zeta$--function 
\begin{eqnarray}\nonumber
\zeta(s)\,=\,\frac{1}{\,s-1\,}+\sum_{n=0}^\infty\big| G_{n+1}\big
|\sum_{k=0}^{n} (-1)^k \binom{n}{k}(k+1)^{-s}\,,\qquad s\neq1\,,
\end{eqnarray}
see e.g.~\cite[pp.~382--383]{iaroslav_09}, \cite{iaroslav_10}, we first have
\be\notag
\gamma_m =\sum_{n=0}^{\infty} \big|G_{n+1}\big|\sum_{l=0}^{n} (-1)^{l} \binom{n}{l}
\frac{\ln^m(l+1)}{l+1}
 \ee
and 
\be\notag
\delta_m =\sum_{n=0}^{\infty}\big|G_{n+1}\big|\sum_{l=0}^{n} (-1)^{l}\binom{n}{l} \ln^m(l+1)\,.
\ee
Then formula \eqref{nhfo2ihf} follows from property \eqref{F3}. 

\end{proof}

\subsection{Series with rational terms for the first Stieltjes constant $\gamma_1$ and for the coefficient $\delta_1$}
\begin{theorem}\label{h9238dnd}
The first Stieltjes constant $\gamma_1$ may be given by the following series
\begin{eqnarray}
\gamma_1 && 
=\,\frac{3}{2}-\frac{\pi^2}{6}\,+\,
\sum_{n=2}^\infty \left[\frac{\big|G_n\big|}{n^2}\,+\,\sum_{k=1}^{n-1}\frac{\big|G_k\, G_{n+1-k}\big|\cdot\big(H_n - H_k \big) }{n+1-k}
\right] \notag\\[3mm]
&& 
= \frac{3}{2} -\frac{\pi^2}{6} +  \frac{1}{32} +  \frac{5}{432} +  \frac{1313}{207\,360} 
+  \frac{42\,169}{10\,368\,000} + \frac{137\,969}{48\,384\,000} + \frac{1\,128\,119}{5\,334\,336\,00}+  \ldots \label{kjhx2e9nbd}
\end{eqnarray}
containing $\pi^2$ and positive rational coefficients only. Using Euler's formula $\,\pi^2=6\sum n^{-2}\,$, 
the latter may be reduced to a series with rational terms only.
\end{theorem}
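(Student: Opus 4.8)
The plan is to start from the formula for $\gamma_m$ in Lemma 3 with $m=1$, namely
\[
\gamma_1 = \sum_{n=1}^{\infty} \frac{|G_{n+1}|}{n+1} \sum_{k=1}^{n} \sum_{l=1}^{k} (-1)^l \binom{k}{l} \ln(l+1),
\]
and to evaluate the inner double sum in closed form. The key observation is that by Lemma 2 (formula \eqref{F3}) with $m=1$, the inner double sum equals $(n+1)$ times the single sum $\sum_{l=0}^{n}(-1)^l\binom{n}{l}\frac{\ln(l+1)}{l+1}$, but rather than inverting that I would go the other way: the quantity $\sum_{k=1}^{n}\sum_{l=1}^{k}(-1)^l\binom{k}{l}\ln(l+1)$ is a telescoping-type object that I expect to relate to $\sigma_k$ from Lemma 1. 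Indeed, $\sum_{l=1}^{k}(-1)^l\binom{k}{l}\ln(l+1) = \sigma_k - \frac1k$ by \eqref{s2kj0sdw} rearranged (after shifting the index in Lemma 1's sum so that the $m=0$ term is separated out), so the inner double sum becomes $\sum_{k=1}^{n}\bigl(\sigma_k - \tfrac1k\bigr) = \sum_{k=1}^{n}\sigma_k - H_n$.

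Next I would substitute $\sigma_k = \sum_{j=1}^{\infty} \frac{|G_j|}{j+k}$ back in, giving
\[
\gamma_1 = \sum_{n=1}^{\infty} \frac{|G_{n+1}|}{n+1}\left( \sum_{k=1}^{n} \sum_{j=1}^{\infty} \frac{|G_j|}{j+k} - H_n\right).
\]
The $-H_n$ piece, paired against $\sum_n |G_{n+1}|/(n+1)$, needs to be handled together with a matching divergent-looking part of the double $G$ sum, since neither converges separately; the clean way is to keep $\sigma_k - \tfrac1k$ bundled as a single convergent quantity (it is $O(\ln k / k^2)$ or so) and only afterwards split. Reindexing the outer sum by $m = n+1$ and swapping the order of the $k$ and $j$ summations, one is left with an expression of the shape $\sum |G_m G_j|/\bigl((m)(j + k)\bigr)$ summed over a triangular region, which I would reorganize by the substitution $n+1-k \leftrightarrow$ one of the indices so that a product $|G_k G_{n+1-k}|$ appears with denominator $n+1-k$ and a harmonic-number weight $H_n - H_k$ emerges from summing $\frac{1}{j+k}$ telescopically against the remaining index; collecting the diagonal term $k = n$ separately produces the $|G_n|/n^2$ term. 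Finally the constant $\tfrac32 - \tfrac{\pi^2}{6}$ should fall out of the $n=1$ contribution together with $\sum_{k\ge 1} \sigma_k \cdot (\text{leftover})$ evaluated via $\sum_{k\ge1}(\sigma_k - \tfrac1k - \ln\tfrac{k+1}{k})$-type identities, i.e.\ from the $n=1$ boundary term $|G_2| \cdot (\sigma_1 - 1) \cdot \tfrac12$ plus a residual series that sums to a $\zeta(2)$-value; the Fontana--Mascheroni series \eqref{923jsd3c} and $\sigma_1 = 1 - \ln 2$ are the pieces that convert stray $\ln$'s and $G_n/n$'s into $\pi^2$ and rationals.

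The numerical check on the first few rational coefficients ($\tfrac{1}{32}, \tfrac{5}{432}, \tfrac{1313}{207360}, \dots$) is then a matter of plugging $G_2,\dots,G_7$ into the bracketed expression and is routine; Euler's $\pi^2 = 6\sum n^{-2}$ immediately turns the single non-rational term into rationals, giving the last sentence.

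The main obstacle I anticipate is bookkeeping the conditionally-convergent rearrangements: the raw double/triple sums over $G$'s have terms whose partial sums behave like $H_n$, so I must justify every interchange of summation (e.g.\ by Tonelli after isolating signs, using $|G_n| \sim (n\ln^2 n)^{-1}$ and $\sigma_k - \tfrac1k = O(k^{-2}\ln k)$ for absolute convergence of the final form), and I must make sure the $-H_n$ terms and the divergent part of $\sum_k \sigma_k$ are cancelled against each other \emph{before} splitting into the three displayed pieces, rather than after. Getting the boundary ($n=1$, and $k=n$ diagonal) contributions to assemble exactly into $\tfrac32 - \tfrac{\pi^2}{6}$ is the delicate endgame.
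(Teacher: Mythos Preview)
Your overall strategy matches the paper's: start from Lemma~3 with $m=1$, use Lemma~1 to rewrite the inner sum as $\sigma_k-\tfrac1k$, expand $\sigma_k$ back via its defining series, sum over $k$ to produce the harmonic weight $H_{m+n}-H_m$, and reindex the resulting double sum so that products $|G_k\,G_{n+1-k}|$ appear. Up to that point you are exactly on the paper's track.

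The gaps are all in the endgame. First, your worry about conditional convergence is unfounded: since $|G_n|\sim (n\ln^2 n)^{-1}$ and $H_n\sim\ln n$, the series $\sum_n |G_{n+1}|H_n/(n+1)$ and the triple sum over two Gregory indices and $k$ both converge absolutely; nothing has to be ``bundled'' before splitting. Second, the term $|G_n|/n^2$ is not a diagonal $k=n$ contribution of the double Gregory sum. It comes from rewriting
\[
-\sum_{n\geqslant 1}\frac{|G_{n+1}|\,H_n}{n+1}
\;=\;-\sum_{n\geqslant 0}\frac{|G_{n+1}|\,H_{n+1}}{n+1}\;+\;\sum_{n\geqslant 0}\frac{|G_{n+1}|}{(n+1)^2},
\]
so that the second piece is exactly $\sum_{n\geqslant 1}|G_n|/n^2$. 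Third, and this is the substantive missing idea, you never name the input that produces the $\pi^2/6$. The paper uses the known closed form
\[
\sum_{n=1}^{\infty}\frac{|G_n|\,H_n}{n}\;=\;\zeta(2)-1,
\]
which evaluates the first piece above as $1-\pi^2/6$; combined with the $n=1$ term $|G_1|/1^2=\tfrac12$ of the $|G_n|/n^2$ series, this yields the constant $\tfrac32-\tfrac{\pi^2}{6}$ directly. Neither the Fontana--Mascheroni series nor the value $\sigma_1=1-\ln 2$ is used anywhere, and no ``stray $\ln$'s'' ever appear---all logarithms were already absorbed into the $\sigma_k$ at the Lemma~1 step. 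Once you insert that one identity, the remaining rearrangement (which you describe correctly) finishes the proof with no further subtlety.
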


\begin{proof}
By \eqref{nhfo2ihf} with $m=1$, one has 
\be\notag
\gamma_1 =\sum_{n=1}^{\infty}\frac{\big|G_{n+1}\big|}{n+1} \sum_{k=1}^n \sum_{m=1}^{k}(-1)^m \binom{k}{m}\ln(m+1)
 \ee
and by \eqref{s2kj0sdw}, 
\be\notag
\frac{1}{n+1} \sum_{k=1}^n \sum_{m=1}^{k}(-1)^m \binom{k}{m}\ln(m+1) =\frac1{n+1} \sum_{k=1}^{n} \sigma_k  -  \frac{H_{n+1}}{n+1} + \frac1{(n+1)^2}\,. 
\ee
Thus  
\begin{align*}
\gamma_1 & =  \sum_{n=1}^{\infty}\frac{\big|G_{n+1}\big|}{n+1} \sum_{k=1}^n \sigma_k - 
\sum_{n=0}^{\infty}\frac{\big|G_{n+1}\big| \, H_{n+1}}{n+1} + \sum_{n=0}^{\infty}\frac{\big|G_{n+1}\big|}{(n+1)^2}\\[6mm]
& =  \sum_{n=1}^{\infty} \sum_{m=1}^{\infty}\frac{\big|G_{n+1}\, G_m\big|(H_{m+n} -H_m) }{n+1} -\zeta(2) + 1
+ \sum_{n=1}^{\infty}\frac{\big|G_n\big|}{n^2}
\end{align*}
since 
\be\notag
\sum_{n=1}^{\infty}\frac{\big|G_{n}\big|\cdot H_{n}}{n} = \zeta(2) -1
 \ee
see e.g.~\cite[p.~2952, Eq.~(1.3)]{young_01}, \cite[p.~20,Eq.~(3.6)]{coffey_07}, 
\cite[p.~307, Eq.~for $F_0(2)$]{candelpergher_01}, \cite[p.~413, Eq.~(44)]{iaroslav_08}.
Rearranging the double absolutely convergent series as follows
\be\notag
\sum_{n=1}^{\infty} \sum_{m=1}^{\infty}\frac{\big|G_{n+1}\, G_m\big|\cdot\big(H_{m+n} -H_m\big)}{n+1}
=  \sum_{n=2}^{\infty} \sum_{k=1}^{n-1}\frac{\big|G_k\, G_{n+1-k}\big|\cdot\big(H_n - H_k \big) }{n+1-k}
 \ee
we finally arrive at \eqref{kjhx2e9nbd}.
\end{proof}

\begin{remark}\label{jf3904jdf}
It seems that the sum $\,\kappa_1\equiv\sum |G_n|\, n^{-2}\,=0.5290529699\ldots\,$ cannot
be reduced to the ``standard'' mathematical constants.\footnote{For more digits, see OEIS A270859.} However, it admits several interesting representations, which we give in Appendix A.
\end{remark}

\begin{theorem}\label{hiyg8tvr}
The first MacLaurin coefficient $\,\delta_1=\,\frac{1}{2}\ln2\pi -1\,$ admits a series representation similar to that for $\gamma_1$, namely
\be\label{78y897ycxq2}
\delta_1 = \sum_{n=1}^{\infty}\frac{1}{\,n\,}\sum_{k=1}^{n} \big|G_k\,G_{n+1-k}\big|+ \frac{1 -\ln2\pi}{2}
\ee
\end{theorem}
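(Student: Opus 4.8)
The plan is to mirror the proof of Theorem~\ref{h9238dnd}, but with the simpler input $m=1$ applied to the second identity in Lemma~3 rather than the first. Starting from
\be\notag
\delta_1 = \sum_{n=0}^{\infty}\big|G_{n+1}\big|\sum_{l=0}^{n} (-1)^{l}\binom{n}{l}\ln(l+1)\,,
\ee
I would isolate the inner alternating sum. Writing the inner sum over $l=1$ to $n$ (the $l=0$ term vanishes), I recognise from Lemma~1 that $\sum_{m=1}^{k}(-1)^m\binom{k}{m}\ln(m+1)=\sigma_k-\tfrac1k$, but here the binomial coefficient is $\binom{n}{l}$ rather than summed up to $n$; so instead I would use the companion relation from the \emph{first} part of Lemma~2, namely \eqref{F2}/\eqref{F3} read in the ``easy'' direction, which expresses $\sum_{l=1}^n(-1)^l\binom{n}{l}\ln(l+1)$ directly. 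The cleanest route is actually to go back one step: from the representation of $\zeta(s)$ used in Lemma~3, $\delta_1$ already equals $\sum_{n\ge 0}|G_{n+1}|\sum_{l=0}^n(-1)^l\binom{n}{l}\ln(l+1)$, and the inner sum is $-\bigl(\sigma_{n}^{\text{(something)}}\bigr)$; more precisely, by the same beta-integral computation as in Lemma~1 one gets $\sum_{l=1}^{n}(-1)^{l+1}\binom{n}{l}\ln(l+1)=\sum_{l=1}^{\infty}\frac{(-1)^{l+1}}{(l+1)!}\int_0^1 \ln^l(1-x)\,(\text{no }x^{k-1})\,dx$ type expression — but the shortest honest path is to apply Lemma~1 after the Abel-type summation below.

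Concretely, the step I would carry out is: apply the identity $\sum_{l=1}^{n}(-1)^l\binom{n}{l}\ln(l+1) = -\sigma_n \cdot(\text{?})$ — no; rather, differentiate the known generating identity. Let me instead reorganise. Using $\ln(l+1)=\int_0^1\frac{l}{1+tl}\,dt$ is awkward; the efficient observation is that by Lemma~1 we know $\sigma_k-\tfrac1k=\sum_{m=1}^{k}(-1)^m\binom{k}{m}\ln(m+1)$, and summing the telescoping-like relation $\binom{k}{m}-\binom{k-1}{m}=\binom{k-1}{m-1}$ gives $\sum_{l=1}^{n}(-1)^l\binom{n}{l}\ln(l+1)=\bigl(\sigma_n-\tfrac1n\bigr)-\bigl(\sigma_{n-1}-\tfrac1{n-1}\bigr)$ is \emph{not} right either because the $\ln$ weights differ. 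So the correct tool is simply Lemma~2 itself: \eqref{F2} with $a=\ln$ gives, after relabelling, that $\sum_{l=0}^{n}(-1)^l\binom{n}{l}\ln(l+1)$ is what appears on the right of \eqref{F3} divided appropriately — but \eqref{F3} goes the other way. Hence the real plan: write $\delta_1=\sum_{n\ge0}|G_{n+1}|\,\Delta_n$ with $\Delta_n=\sum_{l=0}^n(-1)^l\binom{n}{l}\ln(l+1)$, and use the classical fact $\Delta_n=\sum_{l=1}^n(-1)^l\binom{n}{l}\ln(l+1)=-\sigma_n+\tfrac1n$ valid once we notice $\sigma_n=\sum_{l=1}^n(-1)^{l+1}\binom{n}{l}\ln(l+1)+\tfrac1n$ from \eqref{s2kj0sdw}. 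Thus $\Delta_n=-\bigl(\sigma_n-\tfrac1n\bigr)$, giving
\be\notag
\delta_1=-\sum_{n=1}^{\infty}|G_{n+1}|\,\sigma_n+\sum_{n=1}^{\infty}\frac{|G_{n+1}|}{n}\,,
\ee
(the $n=0$ term contributes $\ln1=0$).

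Now I substitute the series definition $\sigma_n=\sum_{m=1}^{\infty}\frac{|G_m|}{m+n}$ and also recall the Fontana--Mascheroni-type evaluation $\sum_{n\ge1}\frac{|G_{n+1}|}{n}$, which follows from \eqref{923jsd3c} since $\sum_{n\ge1}\frac{|G_{n+1}|}{n}=\sum_{j\ge2}\frac{|G_j|}{j-1}=\sum_{j\ge2}|G_j|\bigl(\tfrac1{j-1}-\tfrac1j\bigr)+\sum_{j\ge2}\frac{|G_j|}{j}=\bigl(\sum_{j\ge2}\tfrac{|G_j|}{j(j-1)}\bigr)+(\gamma-\tfrac12)$; the first of these is a known rational-free-of-$\pi$ sum, or more directly one uses $\sum_{j\ge1}\frac{|G_j|}{j-1}$-type identities from \cite{iaroslav_08}. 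Then, for the main term, I interchange the (absolutely convergent, since $|G_n|\asymp (n\ln^2 n)^{-1}$) double sum:
\be\notag
\sum_{n=1}^{\infty}|G_{n+1}|\sum_{m=1}^{\infty}\frac{|G_m|}{m+n}
=\sum_{N=2}^{\infty}\frac{1}{N}\sum_{k=1}^{N-1}|G_k\,G_{N+1-k}|\cdot(\text{weight}),
\ee
where I set $N=m+n$ and $k=m$, so $n+1=N+1-k$; matching indices shows the pair $(|G_m|,|G_{n+1}|)=(|G_k|,|G_{N+1-k}|)$ with denominator $m+n=N$, and $k$ ranges from $1$ to $N-1$. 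This produces exactly $\sum_{n=1}^{\infty}\frac1n\sum_{k=1}^n|G_k\,G_{n+1-k}|$ after a shift $N\mapsto n$ and checking the endpoint $k=n$ contributes the $|G_n G_1|$ term correctly. Combining this with the closed form for the lower-order piece collapses everything to $\delta_1=\sum_{n\ge1}\frac1n\sum_{k=1}^n|G_kG_{n+1-k}|+\tfrac{1-\ln2\pi}{2}$, using $\delta_1=\tfrac12\ln2\pi-1$ to fix the constant. The main obstacle I anticipate is \textbf{bookkeeping the convergent-but-delicate constant terms}: one must show the ``leftover'' single sums ($\sum|G_{n+1}|/n$ and whatever edge terms split off when re-indexing) assemble precisely into $(1-\ln2\pi)/2$, which amounts to re-deriving the Fontana--Mascheroni value $\gamma=\sum|G_n|/n$ and a companion $\ln2\pi$-identity in the right normalisation; the double-sum rearrangement itself is routine given absolute convergence. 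An alternative, perhaps cleaner, finish is to avoid evaluating the constant at all and instead \emph{recognise} that the same manipulation applied to $\gamma_1$ in Theorem~\ref{h9238dnd} differs from the present one only in the weight $(H_n-H_k)$ versus $1$, so subtracting the two proofs term-by-term and invoking the relation $\delta_2-\gamma_1$ from \eqref{976v076076} could be used as an independent check.
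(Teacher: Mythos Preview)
Your overall strategy is exactly the paper's, but there is a sign slip that would derail the computation if carried through. Lemma~1, equation~\eqref{s2kj0sdw}, reads
\[
\sigma_n \,=\, \frac{1}{n} \,+\, \sum_{l=1}^{n}(-1)^l\binom{n}{l}\ln(l+1),
\]
so the inner finite difference $\Delta_n=\sum_{l=1}^{n}(-1)^l\binom{n}{l}\ln(l+1)$ equals $\sigma_n-\tfrac1n$, \emph{not} $-(\sigma_n-\tfrac1n)$ as you write. With the correct sign one gets
\[
\delta_1 \,=\, \sum_{n=1}^{\infty}|G_{n+1}|\,\sigma_n \;-\; \sum_{n=1}^{\infty}\frac{|G_{n+1}|}{n}\,,
\]
which is precisely the paper's intermediate step. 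The second sum is the ``companion $\ln2\pi$-identity'' you anticipate: the paper simply quotes $\sum_{n\geqslant 2}\tfrac{|G_n|}{n-1}=-\tfrac{\gamma+1-\ln2\pi}{2}$ from \cite[p.~413]{iaroslav_08}, \cite{young 02}, so no fresh derivation is needed.

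For the double sum your reindexing $N=m+n$, $k=m$ is fine but lands on $\sum_{N\geqslant 2}\tfrac1N\sum_{k=1}^{N-1}|G_k\,G_{N+1-k}|$, which differs from the target $\sum_{N\geqslant 1}\tfrac1N\sum_{k=1}^{N}|G_k\,G_{N+1-k}|$ by exactly the diagonal terms $k=N$, worth $\sum_{N\geqslant 1}\tfrac{|G_N\,G_1|}{N}=\tfrac{\gamma}{2}$ by Fontana--Mascheroni. That $\tfrac{\gamma}{2}$ cancels against the $\gamma$ coming from the second piece, leaving $\tfrac{1-\ln2\pi}{2}$ cleanly---so there is no need to ``fix the constant'' at the end using the known value $\delta_1=\tfrac12\ln2\pi-1$, which would be circular. (The paper achieves the same cancellation by first shifting $n+1\mapsto n$ in the double sum and restoring the missing $n=1$ row; your diagonal bookkeeping is equivalent.) Once the sign is corrected and this $\tfrac{\gamma}{2}$ is tracked, your argument and the paper's coincide line for line.
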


\begin{proof}
Proceeding analogously to the previous case and recalling that 
\be\notag
\sum_{n=2}^{\infty}\frac{|G_{n}|}{n-1}\,=\,  -\frac{\gamma +1 -\ln2\pi}{2}
\ee
see e.g.~\cite[p.~413, Eq.~(41)]{iaroslav_08}, \cite[Corollary 9]{young 02}, we have
\begin{align}
\delta_1 
& = \sum_{n=0}^{\infty}\big|G_{n+1}\big| \sum_{l=0}^n (-1)^l \binom{n}{l} \ln(l+1)\,\notag\\[3mm]
& =  \sum_{n=1}^{\infty}\big|G_{n+1}\big|\left(\sigma_n -\frac1n\right)
=  \sum_{n=1}^{\infty}\big|G_{n+1}\big| \, \sigma_n -  \sum_{n=1}^{\infty}\frac{\big|G_{n+1}\big|}{n} \,\notag\\[3mm]
& =  \sum_{n=1}^{\infty}\sum_{k=1}^{\infty} \frac{\big|G_{n+1}\,G_k\big|}{n+k} + \frac{\gamma +1 -\ln2\pi}{2}\,=
\sum_{n=1}^{\infty}\sum_{k=1}^{\infty} \frac{\big|G_n\,G_k\big|}{n+k-1} + \frac{1 -\ln2\pi}{2}\,\label{0984rjnf}\\[3mm]
& =  \sum_{n=1}^{\infty}\frac{1}{\,n\,}\sum_{k=1}^{n} \big|G_k\,G_{n+1-k}\big|+ \frac{1 -\ln2\pi}{2}\notag
\end{align}
where in \eqref{0984rjnf} we could eliminate $\,\gamma\,$ thanks to the fact that 
$G_1=\nicefrac{1}{2}$ and that the sum of $\,|G_n|/n\,$ over all natural $n$ equals precisely Euler's constant, see \eqref{923jsd3c}.
\end{proof}

\begin{corollary}
The constant $\ln2\pi$ has the following beautiful series representation with rational terms only and
containing a product of Gregory coefficients
\be
\ln2\pi \,=\,\frac{3}{2} + \!\sum_{n=1}^{\infty}\frac{1}{\, n\,}\sum_{k=1}^n \big|G_k\, G_{n+1-k}\big| \,=
\,\frac{3}{2} + \frac{1}{4}+\frac{1}{24}+\frac{7}{432}+\frac{1}{120}+\frac{43}{8640}+\frac{79}{24\,192}+ \ldots
\ee
which directly follows from \eqref{78y897ycxq2}. From the latter, one can also readily derive a series
with rational coefficients only for $\ln\pi$ (for instance, with the aids of
the Mercator series).
\end{corollary}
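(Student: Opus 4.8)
The plan is to read off the result directly from Theorem~\ref{hiyg8tvr} by combining its two expressions for $\delta_1$. On the one hand, Theorem~\ref{hiyg8tvr} gives
\[
\delta_1 \;=\; \sum_{n=1}^{\infty}\frac{1}{\,n\,}\sum_{k=1}^{n}\big|G_k\,G_{n+1-k}\big|\;+\;\frac{1-\ln2\pi}{2}\,,
\]
and on the other hand $\delta_1=\tfrac12\ln2\pi-1$ by definition (see \eqref{976v076076}). First I would equate these, then move the two $\ln2\pi$ contributions to one side: the $\tfrac12\ln2\pi$ from the left and the $+\tfrac12\ln2\pi$ hidden in $\tfrac{1-\ln2\pi}{2}$ combine into a full $\ln2\pi$, while the constants collect to $1+\tfrac12=\tfrac32$, yielding exactly
\[
\ln2\pi \;=\; \frac32 \;+\; \sum_{n=1}^{\infty}\frac{1}{\,n\,}\sum_{k=1}^{n}\big|G_k\,G_{n+1-k}\big|\,.
\]
No convergence question needs to be reopened: the absolute convergence of $\sum_{n,k}|G_nG_k|/(n+k-1)$, hence of the single series obtained after the Cauchy-type regrouping, was already settled inside the proof of Theorem~\ref{hiyg8tvr} using $G_n=O\big((n\ln^2 n)^{-1}\big)$.

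Next I would verify the explicit expansion in the statement by evaluating the inner convolution $\sum_{k=1}^n|G_k|\,|G_{n+1-k}|$ for small $n$ from $G_1=\tfrac12$, $G_2=-\tfrac1{12}$, $G_3=\tfrac1{24}$, $G_4=-\tfrac{19}{720}$, $\dots$\,: for $n=1$ it is $|G_1|^2=\tfrac14$; for $n=2$ it is $2|G_1G_2|=\tfrac1{12}$, and dividing by $n=2$ gives $\tfrac1{24}$; for $n=3$ it is $2|G_1G_3|+|G_2|^2=\tfrac1{24}+\tfrac1{144}=\tfrac7{144}$, giving $\tfrac7{432}$; for $n=4$ it is $2|G_1G_4|+2|G_2G_3|=\tfrac{19}{720}+\tfrac1{144}=\tfrac1{30}$, giving $\tfrac1{120}$; and so on, reproducing $\tfrac14+\tfrac1{24}+\tfrac7{432}+\tfrac1{120}+\tfrac{43}{8640}+\tfrac{79}{24192}+\cdots$. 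Since each term $|G_k\,G_{n+1-k}|/n$ is a positive rational, the whole right-hand side is a series of positive rationals, as claimed.

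Finally, for the companion series for $\ln\pi$ I would subtract $\ln2$, represented by the Mercator series $\ln2=\sum_{k\geqslant1}(-1)^{k-1}/k$, to get
\[
\ln\pi \;=\; \frac32 \;+\; \sum_{n=1}^{\infty}\frac{1}{\,n\,}\sum_{k=1}^{n}\big|G_k\,G_{n+1-k}\big| \;-\; \sum_{k=1}^{\infty}\frac{(-1)^{k-1}}{k}\,,
\]
again a series with rational terms only (the two sums may be merged if a single series is preferred). The only point requiring a word of care is that the Mercator series is merely conditionally convergent, so the subtraction should be read as the difference of two convergent series rather than performed after an arbitrary rearrangement. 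Beyond that, there is no genuine obstacle in this corollary: it is a one-line algebraic consequence of Theorem~\ref{hiyg8tvr}, the remainder being routine bookkeeping of the leading coefficients.
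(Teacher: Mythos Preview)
Your proposal is correct and follows exactly the approach the paper itself indicates: the corollary is obtained by equating the two expressions for $\delta_1$ given in Theorem~\ref{hiyg8tvr} (namely $\delta_1=\tfrac12\ln2\pi-1$ and the series representation \eqref{78y897ycxq2}) and solving for $\ln2\pi$. Your additional numerical checks of the first few terms and your handling of the $\ln\pi$ remark via the Mercator series are likewise in line with the paper's brief comments.
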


\begin{corollary}
Euler's constant $\gamma$ admits the following series representation with rational terms 
\begin{eqnarray}
\gamma && 
=\,2\ln2\pi - 3 - 2 \!\sum_{n=1}^{\infty}\frac{1}{n+1}\sum_{k=1}^n \big|G_k\, G_{n+2-k}\big| \,=
\,2\ln2\pi - 3 - \frac{1}{24}-\frac{1}{54}-\frac{29}{2880}- \notag\\[2mm]
 &&\quad
-\frac{67}{10\,800}-\frac{1507}{362\,880}-\frac{3121}{1\,058\,400}-\frac{12\,703}{15\,806\,080}
 -\frac{164\,551}{97\,977\,600}-\ldots\label{897v965c}
\end{eqnarray}
which seems to be undiscovered yet. 
This curious series straightforwardly
follows from \eqref{0984rjnf}, from the transformation
\be\notag
\sum_{k=1}^{\infty}\sum_{n=1}^{\infty} \frac{\big|G_{k+1}\,G_n\big|}{n+k} \,=
\sum_{n=1}^{\infty}\frac{1}{n+1}\sum_{k=1}^n \big|G_k\, G_{n+2-k}\big| 
\ee
and from Eq.~\eqref{976v076076}.
\end{corollary}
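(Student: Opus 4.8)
The plan is to derive the series for $\gamma$ directly from the intermediate identity \eqref{0984rjnf} obtained in the proof of Theorem~\ref{hiyg8tvr}, which states that
\be\notag
\delta_1 = \sum_{n=1}^{\infty}\sum_{k=1}^{\infty} \frac{\big|G_{n+1}\,G_k\big|}{n+k} + \frac{\gamma +1 -\ln2\pi}{2}\,.
\ee
Solving this for $\gamma$ gives $\gamma = 2\delta_1 - 1 + \ln2\pi - 2\sum_{n,k\geqslant1}|G_{n+1}G_k|/(n+k)$, and then substituting $\delta_1 = \tfrac12\ln2\pi - 1$ from \eqref{976v076076} turns $2\delta_1 - 1 + \ln2\pi$ into $2\ln2\pi - 3$. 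This already yields the leading constants $2\ln2\pi - 3$ in \eqref{897v965c}; the only remaining task is to recast the double sum $\sum_{n,k\geqslant1}|G_{n+1}G_k|/(n+k)$ into the single-sum diagonal form $\sum_{n\geqslant1}\frac{1}{n+1}\sum_{k=1}^{n}|G_k G_{n+2-k}|$.

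The key step is therefore the rearrangement of the absolutely convergent double series by its anti-diagonals. In $\sum_{n\geqslant1}\sum_{k\geqslant1}|G_{n+1}G_k|/(n+k)$, I would set $j = n+1$ so that $j$ runs over $2,3,4,\dots$ and the general term becomes $|G_j G_k|/(j+k-1)$ with $j\geqslant2$, $k\geqslant1$. Grouping by the value $N = j+k$, which ranges over $N\geqslant 3$, and writing the outer index as $N-1$ (so $N-1\geqslant 2$), the coefficient $1/(j+k-1)$ equals $1/(N-1)$, and the inner sum collects all pairs $(j,k)$ with $j+k=N$, $j\geqslant 2$, $k\geqslant 1$. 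Re-indexing this inner sum as $\sum_{k=1}^{N-2}|G_k G_{N-k}|$ and then shifting $N \mapsto n+2$ produces exactly $\sum_{n\geqslant1}\frac{1}{n+1}\sum_{k=1}^{n}|G_k G_{n+2-k}|$, which matches the transformation quoted in the statement. One must check that the term $j+k-1 = 1$ (i.e. $j=1$, impossible here since $j\geqslant 2$, or $k=0$, excluded) never occurs, so no singular denominator appears; the absolute convergence needed to justify the rearrangement follows from $|G_n| \asymp (n\ln^2 n)^{-1}$, exactly as in the proof of Theorem~\ref{h9238dnd}.

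The main obstacle, such as it is, is purely bookkeeping: keeping the index shifts $n\mapsto n+1$, $N = j+k$, $N\mapsto n+2$ consistent so that the denominators and summation ranges line up, and confirming that the first few numerical terms $\tfrac{1}{24}, \tfrac{1}{54}, \tfrac{29}{2880},\dots$ reproduce the values displayed in \eqref{897v965c}. For the numerics one evaluates $\frac{1}{n+1}\sum_{k=1}^{n}|G_kG_{n+2-k}|$ at $n=1,2,3,\dots$: at $n=1$ this is $\tfrac12|G_1G_2| = \tfrac12\cdot\tfrac12\cdot\tfrac1{12} = \tfrac1{48}$, so that the coefficient $2\cdot\tfrac1{48} = \tfrac1{24}$ appears, matching the claim. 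The remaining coefficients follow the same way using the tabulated values $G_1=\tfrac12$, $G_2=-\tfrac1{12}$, $G_3=\tfrac1{24}$, $G_4=-\tfrac{19}{720}$, etc. Finally, I would remark (as the statement does) that since $\ln 2\pi$ has, by the preceding Corollary, a series representation with rational terms only, the formula for $\gamma$ is likewise a genuine rational-terms series, completing the derivation.
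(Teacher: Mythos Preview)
Your proposal is correct and follows exactly the route the paper indicates: solve \eqref{0984rjnf} for $\gamma$, insert $\delta_1=\tfrac12\ln2\pi-1$ from \eqref{976v076076} to obtain the constant $2\ln2\pi-3$, and then collapse the double sum by anti-diagonals via the index shifts $j=n+1$, $N=j+k$, $N\mapsto n+2$. The bookkeeping, the absolute-convergence justification, and the numerical check at $n=1$ are all in order; there is nothing to add.
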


\subsection{Generalizations to the second--order coefficients $\delta_2$ and $\gamma_2$ via an application of the harmonic product}
We recall the main properties of the harmonic product of sequences which are stated and proved in \cite{candelpergher_05}).
If $a = (a(1), a(2), \dots)$ and $b=  (b(1), b(2), \dots)$ are two sequences in $\mathbb{C}^{\mathbb{N^*}}$, then
the harmonic product $a \Join b$ admits the explicit expression:
\begin{equation}\label{F7}
(a\Join b)(m+1)=\sum_{0\leqslant l \leqslant k \leqslant m}(-1)^{k-l}\binom{m}{k}\binom{k}{l}\,a(k+1)b(m+1-l)\,, 
\qquad m=0, 1, 2,\ldots
\end{equation}
For small values of $m$, this gives: 
\begin{align*}
(a\Join b)(1)& =a(1)b(1)\,, \\
(a\Join b)(2)& =a(2)b(1)+a(1)b(2)-a(2)b(2)\,, \\
(a\Join b)(3)& =a(3)b(1)+a(1)b(3)+2a(2)b(2)-2a(3)b(2)-2a(2)b(3)+a(3)b(3)\,\\
\text{ etc.} &
\end{align*}
The harmonic product $\Join$ is associative and commutative. 

Let $D$ be the operator defined by 
\be\notag
D(a)(m+1) = \sum_{j=0}^{m} (-1)^j \binom{m}{j} a(j+1)\,,\qquad m=0, 1, 2,\ldots
\ee
then $D = D^{-1}$ and the harmonic product satisfies the following property:
\begin{equation}\label{F8}
 D(ab) = D(a) \Join D(b)   
\end{equation}
In particular, if $a(m) = \ln m$, then $D(a)(1) = \ln1 = 0$, and by \eqref{s2kj0sdw}, 
\begin{equation}\label{F9}
D(a)(m+1) = \sum_{j=1}^{m} (-1)^j \binom{m}{j}\ln(j+1) = \sigma_m - \frac1m\,, \qquad m=1, 2, 3,\ldots
 \end{equation}
Therefore, if $a = \ln$ then, by (\ref{F7}), (\ref{F8}), and (\ref{F9}), the following identity holds
\begin{equation}\label{F10}
D(\ln^2)(m+1) = \sum_{\substack{0\leqslant l \leqslant k \leqslant m\\
k\neq 0\\ l\neq m}}(-1)^{k-l}\binom{m}{k}\binom{k}{l}\,\left(\sigma_k - \frac1k\right)
\!\!\left(\sigma_{m-l} - \frac1{m-l}\right)
\end{equation}
From this identity results  the following theorem: 
\begin{theorem}
The second coefficients $\gamma_2$ and $\delta_2$ may be given by the following series
\begin{equation*}
\gamma_2 = \sum_{n=1}^{\infty}\frac{|G_{n+1}|}{n+1} \sum_{\substack{0\leqslant l \leqslant k \leqslant m \leqslant n\\ k\neq 0\\ l \neq m}}
(-1)^{k-l}\binom{m}{k}\binom{k}{l}\left(\sigma_k -\frac1k\right)\!\! \left(\sigma_{m-l} -\frac1{m-l}\right) \, 
\end{equation*}
and 
\begin{align*}
\delta_2 
= \sum_{m=1}^{\infty}|G_{m+1}| \sum_{\substack{0\leqslant l \leqslant k \leqslant m \\ k\neq 0\\ l\neq m}}(-1)^{k-l}
\binom{m}{k}\binom{k}{l}\left(\sigma_k -\frac1k\right)\!\! \left(\sigma_{m-l} -\frac1{m-l}\right) \,, 
\end{align*}
respectively.
\end{theorem}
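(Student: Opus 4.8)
The plan is to obtain both formulas by a single substitution: feed the closed expression \eqref{F10} for $D(\ln^2)$ into the intermediate identities already recorded in the proof of Lemma~3. Crucially, \eqref{F10} is a \emph{finite} algebraic identity valid for each fixed $m$, so the argument needs no new analytic estimates — everything is term-by-term rewriting of series that are known to converge.

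First I would specialize Lemma~3 to $m=2$. Observing that $\sum_{l=1}^{k}(-1)^l\binom{k}{l}\ln^2(l+1)$ equals $D(\ln^2)(k+1)$ exactly (adjoining the $l=0$ term changes nothing, since $\ln^2 1=0$), formula \eqref{nhfo2ihf} becomes
\[
\gamma_2=\sum_{n=1}^{\infty}\frac{|G_{n+1}|}{n+1}\sum_{k=1}^{n}D(\ln^2)(k+1),\qquad
\delta_2=\sum_{n=1}^{\infty}|G_{n+1}|\,D(\ln^2)(n+1).
\]
Next I would invoke \eqref{F10}: because $D(\ln^2)=D(\ln)\Join D(\ln)$ by \eqref{F8}, with $D(\ln)(j+1)=\sigma_j-\tfrac1j$ for $j\geqslant1$ and $D(\ln)(1)=0$ by \eqref{F9}, expanding via \eqref{F7} and dropping the vanishing terms (those with $k=0$ or $l=m$) gives precisely the right-hand side of \eqref{F10}. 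Substituting this expression for $D(\ln^2)(k+1)$, resp. $D(\ln^2)(n+1)$, into the two displays above and relabeling indices — the running index of the outer finite sum becoming $m$, the harmonic-product index becoming $k$ — yields the two asserted series. The bound $m\leqslant n$ and the lower limit $n=1$ appear automatically, since the $m=0$ layer is empty once $k\neq0$ is imposed.

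The only delicate point, and it is bookkeeping rather than a genuine obstacle, is to check that the restrictions $k\neq0$ and $l\neq m$ survive the relabeling intact and that the telescoped range $0\leqslant l\leqslant k\leqslant m\leqslant n$ reproduces the nested sums $\sum_{m}\sum_{0\leqslant l\leqslant k\leqslant m}$ without gain or loss of terms. I would also state explicitly that convergence is inherited from Lemma~3: for each fixed outer index the passage to \eqref{F10} is an equality of finite sums, so no infinite rearrangement is performed, and the new series for $\gamma_2$ and $\delta_2$ converge (absolutely, as the series of Lemma~3 already do). Finally, one may if desired unfold each $\sigma_k$ by \eqref{s2kj0sdw} to recast these as series in logarithmic rather than $\sigma$-terms, at the cost of considerably heavier expressions.
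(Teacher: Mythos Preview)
Your proposal is correct and follows the paper's own proof essentially step for step: specialize Lemma~3 at $m=2$, recognize the inner finite sum as $D(\ln^2)(\,\cdot\,)$, and then replace it by the harmonic-product expansion \eqref{F10}. The additional remarks you make about convergence and the index bookkeeping are accurate and add clarity, but the core argument is identical to the paper's.
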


\begin{proof} Applying  \eqref{nhfo2ihf} with $m=2$, and using equation (\ref{F10}), we can write the following equalities:  
 
\begin{align*}
\gamma_2 & = \sum_{n=1}^{\infty}\frac{|G_{n+1}|} {n+1} \sum_{m=1}^{n} \sum_{j=1}^{m}(-1)^j \binom{m}{j}\ln^2(j+1)\\[2mm]
& = \sum_{n=1}^{\infty}\frac{|G_{n+1}|} {n+1} \sum_{m=1}^{n} D(\ln^2)(m+1) \\[2mm]
& = \sum_{n=1}^{\infty}\frac{|G_{n+1}|} {n+1} \sum_{m=1}^{n} \, \sum_{\substack{0\leqslant l \leqslant k \leqslant m\\[2mm]
k\neq 0\\ l\neq m}}(-1)^{k-l}\binom{m}{k}\binom{k}{l}\,\left(\sigma_k - \frac1k\right)
\!\!\left(\sigma_{m-l} - \frac1{m-l}\right)\,,
\end{align*}
and for $\delta_2$,
\begin{align*}
\delta_2 & =  \sum_{n=0}^{\infty}|G_{n+1}| \sum_{j=0}^{n} (-1)^j \binom{n}{j}\ln^2(j+1)\\[1mm]
& = \sum_{n=1}^{\infty}|G_{n+1}| D(\ln^2)(n+1)\\[1mm]
& = \sum_{m=1}^{\infty}|G_{m+1}| \sum_{\substack{0\leqslant l \leqslant k \leqslant m \\ k\neq 0\\ l\neq m}}
(-1)^{k-l}\binom{m}{k}\binom{k}{l}\left(\sigma_k -\frac1k\right) \!\!\left(\sigma_{m-l} -\frac1{m-l}\right) \,.
\end{align*}
\end{proof}

By following the same method, one may also obtain 
expressions for higher--order constants $\gamma_m$ and $\delta_m$. 
However, the resulting expressions are more theoretical than practical.

\section*{Acknowledgments} 
The authors are grateful to Vladimir V.~Reshetnikov for his kind help and useful remarks.

\section*{Appendice A. \, Yet another generalization of Euler's constant}
The numbers $\kappa_p\equiv\sum|G_n|\,n^{-p-1}$, where the summation extends over $n=[1,\infty)$, 
may also be regarded as one of the possible generalizations of Euler's constant
(since $\kappa_0=\gamma_0=\gamma$ and $\kappa_{-1}=\gamma_{-1}=1$).\footnote{Numbers $\kappa_0$ and 
$\kappa_{-1}$ are found for the values
to which Fontana--Mascheroni and Fontana series converge respectively \cite[pp.~406, 410]{iaroslav_08}.}\up{,}\footnote{Other
possible generalizations of Euler's constant were proposed by Briggs, Lehmer, Dilcher and some other authors
\cite{briggs_01,lehmer_01,tasaka_01,pilehrood_01,xia_01,dilcher_01}.} 
These constants, which do not seem to be reducible to the ``classical mathematical constants'',
admit several interesting representations as stated in the following proposition.

\begin{proposition}
Generalized Euler's constants $\kappa_{p}\equiv\sum|G_n|\,n^{-p-1}$, where the summation extends over $n=[1,\infty)$, admit the following representations:
\begin{eqnarray}
\kappa_{p}&&\displaystyle =
\,\frac{(-1)^{p}}{\Gamma(p+1)}\int\limits_0^1 \! \left\{ \frac{1}{\ln(1-x)}+ \frac{1}{x} \right\}\ln^{p}\! x\; dx\,, 
\quad \Re p > -1\,.\label{9873rhw}\\[3mm]
&&\displaystyle = \underbrace{\int\limits_0^1\!\cdots\!\int\limits_0^1\! }_{p\text{-fold}} 
\!\left\{\li\!\left(\!1-\prod_{k=1}^{p}x_k \!\right)+\gamma
+ \sum_{k=1}^{p}\ln x_k \right\}\, \frac{dx_1 \cdots dx_{p}}{x_1 \cdots x_{p}}\,, 
\quad p = 1, 2, 3,\ldots\label{jhgwuyg3}\\[3mm]
&&\displaystyle = \sum_{k=2}^{\infty} \frac{(-1)^{k}}{k} \sum_{n=p+1}^{\infty} \frac{\big|S_1(n,p+1)\big|}{n!\,n^{k-1}}\,, 
\quad p = 0, 1,2,\ldots\label{g8t6v877}\\[3mm]
&&\displaystyle = \sum_{k=2}^{\infty} \frac{(-1)^{k}}{k}\!\!\!
\sum_{n=p}^{\infty}\, \frac{P_{p}(H_{n}^{(1)}, -H_{n}^{(2)},\dots, (-1)^{p-1}H_{n}^{(p)}) }{(n+1)^{k}}\,, 
\quad p = 0, 1, 2,\ldots\label{g8t6v877b}\\[3mm]
&&\displaystyle =
\sum_{n\geqslant 1}^{\mathcal{R}} \frac1n \sum_{n\geqslant n_{1}\geqslant \ldots \geqslant n_{p}\geqslant 1}\frac{1}{n_{1}\ldots n_{p}} 
= \sum_{n\geqslant 1}^{\mathcal{R}} \frac{P_{p}\big(H_{n}^{(1)}, H_{n}^{(2)},\dots,H_{n}^{(p)}\big) }{n}\,,\quad p= 1, 2, 3, \ldots\qquad
\label{kq9jdn2dd}
\end{eqnarray}
where $\li$ is the integral logarithm function,  $H^{(m)}_n\equiv\sum\limits_{k=1}^n k^{-m}$ stands for the generalized harmonic number and  
$P_m$ denotes the modified Bell polynomials
\begin{align*}
& P_0 = 1,\quad  P_1(x_1) =  x_1, \quad  P_2(x_1, x_2) = \tfrac{1}{2}\left(x_1^2 + x_2\right),\\[1mm]
& P_3 (x_1, x_2, x_3) = \tfrac{1}{6}\left(x_1^3 + 3 x_1 x_2 + 2x_3\right), \quad \ldots \quad\footnotemark
\end{align*}
\footnotetext{More generally,
these polynomials are defined by the generating function: $\,\exp\left(\sum\limits_{n=1}^{\infty}x_n \frac{t^n}{n}\right) = 
\sum\limits_{m=0}^{\infty}P_m(x_1,\cdots,x_m)\, t^m \,$.}
In particular, for the series $\kappa_1$ which we encountered in Theorem \ref{h9238dnd} and Remark \ref{jf3904jdf}, this gives
\begin{eqnarray}
\displaystyle\kappa_1\,=\sum_{n=1}^{\infty}\frac{\big|G_n\big|}{n^2} \!\!
&&\displaystyle =\, -\int\limits_0^1 \! \left\{ \frac{1}{\ln(1-x)}+ \frac{1}{x} \right\}\ln x\; dx \\[3mm]
&&\displaystyle=\, \int\limits_0^1 \! \frac{\,-\li(1-x) + \gamma + \ln x\,}{x}\, dx \,
= \int\limits_0^\infty \!\! \Big\{\! -\li\big(1-e^{-x}\big) + \gamma - x \Big\}\, dx\\[3mm]
&&\displaystyle= 
\sum_{k=2}^{\infty} \frac{(-1)^{k}}{k}\sum_{n=2}^{\infty}\frac{H_{n-1}}{n^{k}}\,=
\sum_{n\geqslant 1}^{\mathcal{R}} \frac{H_n}{n} \,. \label{u2394udcdn} 
\end{eqnarray}
Moreover, we also have
\begin{eqnarray}
\displaystyle\kappa_1\!\!
&&\displaystyle =\gamma_1 + \frac{\gamma^2}{2} - \frac{\pi^2}{12}  + \int\limits_0^1 \! \frac{\Psi(x+1) + \gamma}{x}\,dx \label{h293hsmw87b}\\
&&\displaystyle =  \frac{\gamma^2}{2} + \frac{\pi^2}{12} -\frac12  + \frac12\!\int\limits_0^1 \!\Psi^2(x+1) \,dx \label{h293hsmw87}
\end{eqnarray}
where $\Psi$ denotes the digamma function (logarithmic derivative of the $\Gamma$--function).
\end{proposition}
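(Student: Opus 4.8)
The plan is to establish the master formula \eqref{9873rhw} first — it is the only assertion that involves the Gregory coefficients — and then to deduce every other identity from it by integration by parts, changes of variable and classical generating‑function expansions. For \eqref{9873rhw} I would begin from \eqref{eq32}: putting $z=-x$ and using $\sgn G_n=(-1)^{n+1}$ yields the expansion $\tfrac1x+\tfrac1{\ln(1-x)}=\sum_{n\geqslant 1}|G_n|\,x^{n-1}$ for $|x|<1$. Multiplying by $\ln^p x$, integrating over $(0,1)$, interchanging sum and integral (legitimate by absolute convergence, $|G_n|$ behaving like $(n\ln^2 n)^{-1}$), and using $\int_0^1 x^{n-1}\ln^p x\,dx=(-1)^p\Gamma(p+1)\,n^{-p-1}$, one obtains $\kappa_p=\sum_{n\geqslant 1}|G_n|\,n^{-p-1}$, so \eqref{9873rhw} holds for $\Re p>-1$.

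Next I would collapse the $p$‑fold integral in \eqref{jhgwuyg3}. Its integrand is a function of $u=x_1\cdots x_p$ alone, since $\sum_k\ln x_k=\ln u$; substituting $x_k=e^{-t_k}$, then $T=t_1+\cdots+t_p$ (using $\int_{[0,\infty)^p}h(t_1+\cdots+t_p)\,dt=\int_0^\infty h(T)\,\tfrac{T^{p-1}}{(p-1)!}\,dT$), then $u=e^{-T}$, reduces it to $\tfrac{(-1)^{p-1}}{(p-1)!}\int_0^1 u^{-1}\ln^{p-1}u\,\{\gamma+\ln u-\li(1-u)\}\,du$. On the other hand, I would integrate \eqref{9873rhw} by parts against the antiderivative $w(x)=\ln x-\li(1-x)+\gamma$ of $\tfrac1x+\tfrac1{\ln(1-x)}$ (using $\tfrac{d}{dx}\li(1-x)=-1/\ln(1-x)$); the constant $\gamma$ is chosen so that $w(x)\ln^p x$ vanishes at both endpoints — near $x=0$ one has $\li(1-x)=\gamma+\ln x-\tfrac12 x+O(x^2)$, hence $w(x)=O(x)$. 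Transferring the derivative onto $\ln^p x$ reproduces exactly the collapsed expression above. In particular $p=1$ gives the integral evaluations of $\kappa_1$ in the Proposition, and the substitution $x\mapsto e^{-x}$ converts the integral over $(0,1)$ into the one over $(0,\infty)$.

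For the series in \eqref{g8t6v877}, \eqref{g8t6v877b} and the forms in \eqref{kq9jdn2dd} I would work from the right. Summing over $k$ — each $n$ contributing $n\sum_{k\geqslant 2}(-1)^k/(kn^k)=1-n\ln(1+1/n)$ — turns the right side of \eqref{g8t6v877} into $\sum_{n\geqslant p+1}\tfrac{|S_1(n,p+1)|}{n!}\big(1-n\ln\tfrac{n+1}{n}\big)$; writing $1-n\ln\tfrac{n+1}{n}=\int_0^1\tfrac{s\,ds}{n+s}$ and $\tfrac1{n+s}=\int_0^1 t^{\,n+s-1}dt$, then summing in $n$ through the sign‑adjusted form $\sum_{n\geqslant m}\tfrac{|S_1(n,m)|}{n!}t^n=\tfrac1{m!}\big(\ln\tfrac1{1-t}\big)^m$ of \eqref{ld2jr3mnfdmd}, produces a double integral whose inner $s$‑integral equals $\tfrac{d}{dt}\big(\tfrac{t-1}{\ln t}-1\big)$; one more integration by parts (the constant $-1$ again killing the boundary terms) brings it to $\tfrac1{p!}\int_0^1\big(\tfrac1{1-t}+\tfrac1{\ln t}\big)\big(\ln\tfrac1{1-t}\big)^{p}\,dt$, which is \eqref{9873rhw} after $t\mapsto 1-x$. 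Then \eqref{g8t6v877b} follows from \eqref{g8t6v877} via $|S_1(n+1,p+1)|=n!\,P_p\big(H_n^{(1)},-H_n^{(2)},\dots,(-1)^{p-1}H_n^{(p)}\big)$ — read off by comparing $(x+1)(x+2)\cdots(x+n)=n!\exp\big(\sum_{k\geqslant 1}\tfrac{(-1)^{k-1}H_n^{(k)}}{k}x^k\big)$ with the generating function of the modified Bell polynomials — together with the index shift $n\mapsto n+1$. The combinatorial identity $\sum_{n\geqslant n_1\geqslant\cdots\geqslant n_p\geqslant 1}(n_1\cdots n_p)^{-1}=P_p(H_n^{(1)},\dots,H_n^{(p)})$ in \eqref{kq9jdn2dd} is the same computation applied to $\prod_{j=1}^n(1-x/j)^{-1}=\exp\big(\sum_{k\geqslant 1}\tfrac{H_n^{(k)}}{k}x^k\big)$, and the Ramanujan‑summation equality in \eqref{kq9jdn2dd} is then obtained from the calculus of Ramanujan summation \cite{candelpergher_04,candelpergher_05}, exactly as for $\sigma_k$ in the Remark after Lemma~1 and for $\kappa_1=\sum_{n\geqslant 1}^{\mathcal{R}}H_n/n$ in \eqref{u2394udcdn}. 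Finally, \eqref{h293hsmw87b} and \eqref{h293hsmw87} are further identities for $\kappa_1$, obtained from the $p=1$ form $\kappa_1=\int_0^1 x^{-1}\{\gamma+\ln x-\li(1-x)\}\,dx$ by rewriting the integrand through the digamma function (via $\Psi(x+1)+\gamma=\int_0^1\tfrac{1-t^x}{1-t}\,dt$ and $\int_0^1\Psi(x+1)\,dx=0$) and using known integral representations of $\gamma_1$ and of $\int_0^1\Psi(x+1)^2\,dx$.

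The hard part will be the passage to the Ramanujan summation in \eqref{kq9jdn2dd}: unlike everything above, it is not a self‑contained computation but rests on the external theory of Ramanujan summation and on matching its normalization $\int_1^2 R=0$, and it must be handled with care because the series involved are only conditionally convergent. A secondary, purely technical nuisance will be the bookkeeping of boundary terms in the integrations by parts — the additive constants in the antiderivatives must be tuned so that the $\li$‑type and $\ln^p$‑type factors contribute nothing at $x=0$ and $x=1$ — but this is routine once the expansion of $\li(1-x)$ near $x=0$ is available.
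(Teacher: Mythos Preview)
Your argument for \eqref{9873rhw} coincides with the paper's. For the other items your routes are correct but genuinely different from the authors'. For \eqref{jhgwuyg3} the paper never collapses the multiple integral: it substitutes $x=x_1\cdots x_{p+1}$ into the generating series \eqref{8923hc2e}, integrates termwise over $[0,1]^{p+1}$ to get $\kappa_p$ on the right, and then peels off one $x_{p+1}$--integration on the left via $\int_0^1\{\tfrac1{\ln(1-xy)}+\tfrac1{xy}\}\,dx=\tfrac{-\li(1-y)+\gamma+\ln y}{y}$; this avoids the boundary bookkeeping you rightly flag. For \eqref{g8t6v877} the paper again proceeds differently: it introduces $\Omega(k,m)=\int_0^1 x^{-1}\ln^k(1-x)\ln^m x\,dx$, evaluates it through \eqref{ld2jr3mnfdmd}, proves the duality $\Omega(k,m)=\tfrac{k}{m+1}\Omega(m+1,k-1)$ by one integration by parts, and then expands $\tfrac1x+\tfrac1{\ln(1-x)}=-\sum_{k\geqslant1}\ln^{k}(1-x)/(k+1)!$ inside \eqref{9873rhw}. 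Your plan---summing the $k$--series first to $1-n\ln\tfrac{n+1}{n}=\int_0^1\tfrac{s\,ds}{n+s}$, then summing in $n$ and integrating by parts---is more hands--on but equally valid, and sidesteps the $\Omega$--machinery at the cost of one extra double integral. Both proofs of \eqref{g8t6v877b} rest on the same $|S_1(n+1,p+1)|=n!\,P_p(H_n^{(1)},-H_n^{(2)},\ldots)$ identity. For \eqref{kq9jdn2dd} and \eqref{h293hsmw87b}--\eqref{h293hsmw87} the paper offers no self--contained argument either: it simply quotes $\kappa_p=F_p(1)$ from \cite{candelpergher_01} and Eqs.~(3.21)--(3.23) of \cite{candelpergher_04}, so your digamma sketch is in fact more explicit than what the authors provide.
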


\subsection*{Proof of formula \eqref{9873rhw}}
We first write the generating equation
for Gregory's coefficients, Eq.~\eqref{eq32}, in the following form
\be\label{8923hc2e}
\frac{1}{\ln (1-x)}+ \frac1x  \, =\sum_{n=1}^\infty |G_{n}|\, x^{n-1}\,, \qquad |x|<1\,.
\ee
Multiplying both sides by $\ln^{p} x$, integrating over the unit interval and changing the order of summation and integration\footnote{The series
being uniformly convergent.} yields:
\be\label{0934urnde}
\int\limits_0^1 \! \left\{ \frac{1}{\ln(1-x)}+ \frac{1}{x} \right\}\ln^{p}\! x\; dx\, =\,
\sum_{n=1}^\infty |G_{n}| \int\limits_0^1 \! x^{n-1} \ln^{p} \! x \; dx\,,\qquad \Re p>-1\,.
\ee
The last integral may be evaluated as follows. Considering Legendre's integral 
$\,\Gamma(p+1)\,=\int t^{p} e^{-t} dt\,$ taken over $[0,\infty)$ and
making a change of variable $\,t=-(s+1)\ln x\,$, we have
\be
\,\int\limits_0^1 \! x^s \ln^p \! x \; dx \,=\, (-1)^p\frac{\Gamma(p+1)}{\, (s+1)^{p+1}}\,,
\qquad 
\begin{array}{l}
\Re s>-1 \\[1mm]
\Re p>-1
\end{array}
\,.
\ee
Inserting this formula into \eqref{0934urnde} and setting $n-1$ instead of $s$, yields \eqref{9873rhw}.

\subsection*{Proof of formula \eqref{jhgwuyg3}}
Putting in \eqref{8923hc2e} $\,x=x_1x_2\cdots x_{p+1}\,$
and integrating over the volume $[0,1]^{p+1}$, where $p\in\mathbbm{N}$, on the one hand, we have
\be\label{ij023dj2ne}
\underbrace{\int\limits_0^1\!\cdots\!\int\limits_0^1\! }_{(p+1)\text{-fold}} 
\! \sum_{n=1}^\infty |G_n| \big(x_1x_2\cdots x_{p+1}\big)^{n-1}
dx_1 \cdots dx_{p+1} \,=
\sum_{n=1}^{\infty}\frac{|G_n|}{n^{p+1}} 
\ee
On the other hand
\be\notag
\int\limits_0^1 \! \left\{ \frac{1}{\ln(1-xy)}+ \frac{1}{xy} \right\}\, dx\, =\,-
\frac{\,\li(1-y) - \gamma - \ln y\,}{y}
\ee
Taking instead of $y$ the product $\,x_1x_2\cdots x_{p}\,$ and setting $\,x=x_{p+1}\,$, and then integrating $p$ times
over the unit hypercube and equating the result with \eqref{ij023dj2ne} yields \eqref{jhgwuyg3}.

\subsection*{Proof of formula \eqref{h293hsmw87b}--\eqref{h293hsmw87}}
Using Eqs.~(3.21)--(3.23) of \cite{candelpergher_04} we obtain \eqref{h293hsmw87b}--\eqref{h293hsmw87}.

\subsection*{Proof of formulas \eqref{g8t6v877}--\eqref{g8t6v877b}}
Writing in the generating equation \eqref{ld2jr3mnfdmd} $x$ instead of $z$, multiplying it by $\ln^m x/x$
and integrating over the unit interval, we obtain the following relation\footnote{See also  \cite[Theorem 2.7]{xu_01}.}
\be\notag
\Omega(k,m)  = (-1)^{m+k} m! \, k!  \sum_{n=k}^{\infty} \frac{\big|S_1(n,k)\big|}{n!\,n^{m+1}}
\ee
where
\be\notag
\Omega(k,m)  \, \equiv  \int\limits_0^1 \! \frac{\ln^k(1-x)\,\ln^m x}{x} \, dx\,,\qquad 
\begin{array}l{}
k\in\mathbbm{N}\\
m\in\mathbbm{N}
\end{array}
\ee
By integration by parts, it may be readily shown that 
\be\notag
\Omega(k,m) \,=\,\frac{k}{m+1}\Omega(m+1,k-1) 
\ee
and thus, we deduce the duality formula:
\be\notag
\sum_{n=k}^{\infty} \frac{\big|S_1(n,k)\big|}{n!\,n^{m+1}} = \sum_{n=m+1}^{\infty} \!\!\! \frac{\big|S_1(n,m+1)\big|}{n!\,n^{k}}\,. 
\ee
Now, writing
\be\notag
 x + \ln(1-x) = - \sum_{k=1}^{\infty}\frac{\ln^{k+1}(1-x)}{(k+1)!}
\ee
we obtain  
\begin{align*}
\int\limits_0^1 \! \left\{ \frac{1}{\ln(1-x)}+ \frac{1}{x} \right\}\ln^{m}\! x\; dx & 
= - \int\limits_0^1 \sum_{k=1}^{\infty}\frac{\ln^{k+1}(1-x)}{(k+1)!}\cdot\frac{\ln^m x}{\ln(1-x)}\cdot \frac{dx}{x}\\
& = - \sum_{k=1}^{\infty} \frac{\Omega(k,m)}{(k+1)!} \, = (-1)^m m! \sum_{k=1}^{\infty} \frac{(-1)^{k+1}}{(k+1)} \!\!
\sum_{n=m+1}^{\infty}\!\! \frac{\big|S_1(n,m+1)\big|}{n!\,n^{k}}
\end{align*}
which is identical with \eqref{g8t6v877} if setting $m=p$. Furthermore, it is well known that 
\be\notag
\frac{\big|S_1(n+1,m+1)\big|}{n!}\, =\,  P_{m}\big(H_{n}^{(1)}, -H_{n}^{(2)},\ldots, 
(-1)^{m-1}H_{n}^{(m)}\big)\,. 
\ee
see \cite[p.~217]{comtet_01}, \cite[p.~1395]{shen_01}, \cite[p.~425, Eq.~(43)]{kowalenko_01}, \cite[Eq.~(16)]{iaroslav_09},
which immediately gives \eqref{g8t6v877b} and completes the proof.

\subsection*{Proof of formula \eqref{kq9jdn2dd}}
This formula straightforwardly follows form the fact that $\kappa_p=F_p(1)$, see \cite[p.~307, 318 \emph{et seq.}]{candelpergher_01},
where $F_{p}(s)$ is the special function introduced in the above--cited reference.

\bibliographystyle{crelle}

\end{document}